\renewcommand{\int}{\operatorname{int}}
\newcommand{\cl}{\operatorname{cl}}
\newcommand{\term}{\textit}
\theoremstyle{theorem}
\newtheorem{theorem}{Theorem}
\newtheorem{proposition}[theorem]{Proposition}
\newtheorem{lemma}[theorem]{Lemma}
\newtheorem{corollary}[theorem]{Corollary}
\theoremstyle{definition}
\newtheorem{definition}[theorem]{Definition}
\newtheorem{example}[theorem]{Example}
\newtheorem{question}[theorem]{Question}
\begin{document}

\title{Non-Hausdorff $T_1$ Properties}
\markright{Non-Hausdorff $T_1$ Properties}
\author{Steven Clontz}  %DO NOT FILL IN AUTHOR'S NAMES UNTIL YOU RECEIVE YOUR PROVISIONAL ACCEPT LETTER. SUBMISSIONS TO THE MONTHLY ARE DOUBLE BLIND.

\maketitle

\begin{abstract}
Several weakenings of the \(T_2\) property for
topological spaces, including \(k\)-Hausdorff, \(KC\),
weakly Hausdorff, semi-Hausdorff, \(RC\), and \(US\),
have been studied by mathematicians. Here we provide a complete
survey of how these properties do or do not relate to one another,
including several new results to fill in the gaps in the existing
literature, motivated by the use of a community-maintained
database of topological spaces and their properties.
\end{abstract}

The following properties were introduced in
Wilansky's
``Between \(T_1\) and \(T_2\)''
\cite{MR0208557}, appearing in the \textit{Monthly}
in 1967.

\begin{definition}[P99\footnote{Property ID as assigned in the \(\pi\)-Base \cite{pibase}}]
    A space is \term{US} (``Unique
    Sequential limits'')
    provided that every
    convergent sequence has a unique limit.
\end{definition}

\begin{definition}[P100]
    A space is \term{KC} 
    (``Kompacts are Closed'')
    provided that
    its compact subsets are closed.
\end{definition}

The following fact is observed there.

\begin{proposition}
\[T_2\Rightarrow KC\Rightarrow US\Rightarrow T_1\]
with no implications reversing.
\end{proposition}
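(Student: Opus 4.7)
\medskip
\noindent\textbf{Proof plan.} I would attack the three forward implications first and then tackle the three non-reversals separately.

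For $T_2\Rightarrow KC$, I would use the standard compactness argument: given a compact $K$ and a point $x\notin K$, use $T_2$ to separate $x$ from each $y\in K$ by disjoint opens $U_y\ni x$, $V_y\ni y$; pass to a finite subcover of $K$ by the $V_y$'s and intersect the corresponding $U_y$'s to obtain a neighborhood of $x$ disjoint from $K$, showing $K^c$ is open.

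For $KC\Rightarrow US$, I would argue by contradiction: suppose a sequence $(x_n)$ converges to distinct points $a$ and $b$. If the term $b$ appears infinitely often in $(x_n)$, then every neighborhood of $a$ eventually contains $x_n=b$, hence contains $b$; but $\{b\}$ is compact, so $KC$ makes $\{b\}$ closed, and $\{b\}^c$ is then an open neighborhood of $a$ not containing $b$, a contradiction. Otherwise I truncate past the finitely many indices where $x_n=b$ and look at the compact set $K=\{x_n:n\geq N\}\cup\{a\}$ (compact because every open cover of $K$ must cover $a$ and hence eventually cover the tail). Then $b\notin K$, yet $(x_n)$ tailing into $K$ witnesses $b$ in the closure of $K$, contradicting $KC$.

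For $US\Rightarrow T_1$, I would take $a\neq b$ with $b\in\cl\{a\}$ and observe that the constant sequence $(a,a,a,\dots)$ then converges to both $a$ and $b$, violating $US$; hence singletons are closed.

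The non-reversals are the part I expect to require the most care, since they demand explicit examples rather than a structural argument. For $T_1\not\Rightarrow US$, the cofinite topology on an infinite set works: every injective sequence converges to every point. For $KC\not\Rightarrow T_2$ and $US\not\Rightarrow KC$, suitable counterexamples require a bit more thought (e.g., one-point compactifications of non-locally-compact-Hausdorff spaces, or quotients that destroy regularity while preserving closure of compacta); since this paper is a survey that will catalog such examples in later sections, I would at this stage just cite one witness per non-implication from the $\pi$-Base, deferring verification of their properties to the database or to subsequent examples.
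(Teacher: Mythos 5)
Your three forward implications are all proved correctly, and your route necessarily differs from the paper's, because the paper offers no proof at all here: it simply cites Wilansky's 1967 \textit{Monthly} article for the entire proposition, counterexamples included. Your $T_2\Rightarrow KC$ argument is the standard separation-and-finite-subcover argument; your $US\Rightarrow T_1$ argument via the constant sequence is the usual one. The case split in $KC\Rightarrow US$ is handled soundly: you rightly observe that the tail set $K=\{x_n:n\geq N\}\cup\{a\}$ is compact because any open set containing $a$ swallows a cofinite tail of the sequence, and that after discarding the finitely many indices with $x_n=b$ one gets $b\in\cl(K)\setminus K$, contradicting $KC$. (For comparison, the paper later factors this implication as $KC\Rightarrow wH\Rightarrow US$, proving the second arrow by mapping the compact Hausdorff space $\{0\}\cup\{2^{-n}:n<\omega\}$ into the target; your direct argument is more elementary and avoids the weak Hausdorff detour.) On the non-reversals: the cofinite topology on an infinite set correctly witnesses $T_1\not\Rightarrow US$, and the two witnesses you defer are in fact supplied later in the paper --- S29, the one-point compactification of the rationals, is $KC$ but not $T_2$, and S165, the one-point compactification of the Arens--Fort space, is $US$ but not $wH$ and hence not $KC$ --- so a fully self-contained proof would need to import those verifications, but nothing in your plan is incorrect.
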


In the process of modeling this result from 1967
in the \textit{\(\pi\)-Base community database of
topological counterexamples} \cite{pibase} (an
open-source web application serving as a
modern adaptation of \cite{steen1978counterexamples})
several other examples of properties in the
literature properly implied by \(T_2\) and
properly implying \(T_1\) were encountered. This paper
aims to provide a modern update to Wilansky's 1967 manuscript
by surveying these non-Hausdorff $T_1$ properties.

% These results are a combination of retooled work from the existing
% peer-reviewed literature, proofs aired out in online forums such
% as \textit{MathOverflow}, and original results of the author.
% In particular, we aim to illustrate the \(\pi\)-Base as a tool
% for advancing mathematics, and Section \ref{sec-infra}
% provides some details on how modern sociotechnical
% infrastructure for mathematics research is transforming, and will
% transform, the human experience of mathematics research in general.

\section{\(k\)-Hausdorff}

The following property was introduced in
\cite{MR0374322}; here we use an equivalent characterization
(but will revisit the original definition in Section \ref{k2Hsec}).

\begin{definition}[P170]\label{k1H}
A space is \term{\(k_1\)-Hausdorff} or \(k_1H\)
provided each compact subspace is \(T_2\).\footnote{
The subscript in \(k_1\) will be explained in section \ref{k2Hsec}.}
\end{definition}

Since \(T_2\) is a hereditary property,
it's immediate that all \(T_2\) spaces
are \(k_1H\). In fact, we have the following.

\begin{theorem}
\[T_2\Rightarrow k_1H\Rightarrow
KC.\]
\end{theorem}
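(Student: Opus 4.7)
The first implication is essentially free: since $T_2$ is hereditary, every subspace of a $T_2$ space is $T_2$, so in particular every compact subspace is. So the real content is $k_1H \Rightarrow KC$, and that is where I would focus the plan.

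The plan is to take a $k_1H$ space $X$ and an arbitrary compact subset $K \subseteq X$, and show $K$ is closed by verifying that every point $x \notin K$ has an open neighborhood disjoint from $K$. The key move is to pass to a conveniently chosen compact subspace which the hypothesis forces to be Hausdorff: namely $L := K \cup \{x\}$. This is a finite union of compact sets, hence compact in $X$, so by $k_1H$ the subspace $L$ is $T_2$.

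Inside the $T_2$ space $L$, the set $K$ is compact (it is compact in $X$, and compactness is intrinsic), and compact subsets of Hausdorff spaces are closed. Therefore $K$ is closed in $L$, which means the singleton $\{x\} = L \setminus K$ is open in $L$. By definition of the subspace topology, there exists an open $U \subseteq X$ with $U \cap L = \{x\}$; such a $U$ is a neighborhood of $x$ disjoint from $K$, completing the proof.

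The whole argument is short once one has the idea of enlarging $K$ by the single point $x$, and the main obstacle is really just spotting that auxiliary compact set; after that, one is just invoking the standard fact that compact subsets of Hausdorff spaces are closed, applied \emph{inside} the Hausdorff subspace rather than in $X$ itself.
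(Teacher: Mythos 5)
Your proposal is correct and follows essentially the same route as the paper: the first implication from heredity of $T_2$, and the second by forming the compact set $K\cup\{x\}$ for $x\notin K$, invoking $k_1H$ to make it Hausdorff, and concluding that $\{x\}$ is relatively open so $x$ is not a limit point of $K$. (If anything, your version is slightly cleaner, since the paper's proof contains an apparent typo writing $x\in K$ where $x\notin K$ is intended.)
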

\begin{proof}
This follows from Theorem 2.1 of \cite{MR0374322}.
To see the second implication, let \(X\) be
\(k_1H\) and let \(K\subseteq X\)
be compact. Then for each \(x\in K\),
the set \(Y=K\cup\{x\}\) is compact, and thus \(T_2\).
Then \(K\) is a compact subset of \(T_2\)
\(Y\), and thus closed in \(Y\). It follows that
\(\{x\}\) is open in \(Y\) and not a limit point
of \(K\) in \(X\).
\end{proof}

Example 3.6 of \cite{MR0374322} demonstrates
that the first implication does not reverse,
even when topologies are assumed to be
generated by compact subspaces (P140), by taking a quotient of
the tangent disc topology
(S74\footnote{Space ID as assigned in the \(\pi\)-Base.}).

Without requiring spaces be P140,
an elementary counterexample is available.

\begin{definition}[P136]
A space is \term{anticompact} provided all
compact subsets are finite.
\end{definition}

\begin{lemma}
All anticompact \(T_1\) spaces are
\(k_1H\).
\end{lemma}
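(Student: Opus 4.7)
The plan is essentially to chain definitions together, since the hypotheses are quite strong. Let $X$ be an anticompact $T_1$ space and let $K \subseteq X$ be a compact subspace; I need to show $K$ is $T_2$.

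First I would use anticompactness to conclude that $K$ is finite. Next, since $T_1$ is hereditary, the subspace $K$ inherits the $T_1$ property. The key observation is then that a finite $T_1$ space is discrete: every singleton in $K$ is closed, so every subset of $K$ (being a finite union of closed singletons) is closed, which means every subset is also open. Finally, any discrete space is vacuously $T_2$, since distinct points can be separated by their respective singleton neighborhoods.

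I do not expect any real obstacle here. The only step worth flagging is the passage from ``finite $T_1$'' to ``discrete,'' which might be worth stating explicitly for the reader rather than leaving implicit. The entire proof should fit comfortably in two or three sentences.
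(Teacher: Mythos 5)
Your proposal is correct and follows exactly the paper's argument: each compact subset is finite by anticompactness, inherits $T_1$, and every finite $T_1$ space is discrete and hence $T_2$. The paper states this in two sentences; your version merely spells out the ``finite $T_1$ implies discrete'' step, which is a reasonable expository choice.
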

\begin{proof}
Each compact subset is a finite \(T_1\) space.
All finite \(T_1\) spaces are discrete and thus
\(T_2\).
\end{proof}

\begin{definition}[P39]
A space is \term{hyperconnected} provided
every nonempty open set is dense.
\end{definition}

\begin{example}[S17]\label{S17}
The co-countable topology on an uncountable
set is \(k_1H\) but not \(T_2\). To see this,
first observe that the space is hyperconnected:
every pair of non-empty open sets intersects.
Therefore the space is not \(T_2\),
but the space is still \(k_1H\), which
follows from the fact that the space is
anticompact and \(T_1\), and the preceding lemma.
\end{example}

Any example of a compact non-\(T_2\)
space provides
an example of a non-$k_1H$ space, such as
the following.

\begin{definition}
The \term{one-point compactification} of a space
\(X\) is \(X\cup\{\infty\}\) where points of \(X\)
have their usual neighborhoods, and neighborhoods
of \(\infty\) are complements of closed and compact
subsets of \(X\).
\end{definition}

\begin{example}[S29]\label{S29}
The one-point compactification of the rationals
is \(KC\) but not \(k_1H\),
because the one-point compactification
of any \(KC\), compactly-generated,
non-locally compact space is
\(KC\) but not \(T_2\): see Corollary 1 of
\cite{MR0208557}.
\end{example}

\begin{example}[S97]\label{S97}
Counterexample \#99 from 
\cite{steen1978counterexamples} is
the set \(\omega^2\cup\{\infty_x,\infty_y\}\) with \(\omega^2\)
discrete. Neighborhoods of \(\infty_x\) contain
all but finitely-many elements from each row of
\(\omega^2\), and neighborhoods of \(\infty_y\) contain
all but finitely-many entire rows of \(\omega^2\).

Compact and not-\(T_2\) are immediate. To see
that compacts are closed, note that a non-closed
set must have either \(\infty_x\) or \(\infty_y\) as a limit
missing from the set. If \(\infty_x\) is the
missing limit, the set must contain infinitely-many
points from some row, and then
the open cover of the set by a neighborhood of
\(\infty_y\) missing that row and singletons of \(\omega^2\)
otherwise has no finite subcover. If \(\infty_y\) is the 
missing limit, the set must contain at least one
point from infinitely-many rows, and then the
open cover of the set by a neighborhood of
\(\infty_x\) missing those points and the singletons
of \(\omega^2\) otherwise has no finite subcover.
\end{example}

\section{Weakly Hausdorff}

The following property was introduced in
\cite{mccord1969classifying}.

\begin{definition}[P143]
A space is \term{weakly Hausdorff} or
\(wH\) provided
the continuous image of any compact Hausdorff
space into the space is closed.
\end{definition}

In that paper it was observed that \(T_2\) was
sufficient and \(T_1\) was necessary for this
property. In fact, this can be tightened.

\begin{theorem}
\[KC\Rightarrow wH\Rightarrow US.\]
\end{theorem}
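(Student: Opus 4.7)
The first implication is immediate: if $f : K \to X$ is continuous with $K$ compact Hausdorff, then $f(K)$ is a compact subset of $X$, hence closed by $KC$.

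For $wH \Rightarrow US$, the plan is to package the convergence data of a sequence as a continuous map out of the compact Hausdorff space $\omega + 1$, and then feed this through the definition of $wH$. Given $x_n \to x$ in $X$, the map $f : \omega + 1 \to X$ with $f(n) = x_n$ and $f(\infty) = x$ is continuous, so weak Hausdorff forces its image $\{x_n : n \in \omega\} \cup \{x\}$ to be closed. I would also record at the outset that $wH$ implies $T_1$, since each singleton is the continuous image of a one-point compact Hausdorff space.

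Now assume toward contradiction that $x_n \to x$ and $x_n \to y$ with $x \neq y$. Since $x_n \to y$, one has $y \in \cl\{x_n\}$, and the closed set $\{x_n\} \cup \{x\}$ contains $\cl\{x_n\}$. As $y \neq x$, this forces $y = x_N$ for some $N$.

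The main obstacle is that a single equality $y = x_N$ is not yet a contradiction: $y$ could appear once in the sequence without $x = y$. The remedy is to iterate. Each tail $(x_n)_{n > N}$ still converges to both $x$ and $y$, so applying the same argument repeatedly yields an infinite subsequence $(x_{n_k})$ identically equal to $y$. As a subsequence of $(x_n)$ it also converges to $x$; but in a $T_1$ space a constant sequence converges only to its constant value, yielding $x = y$, the desired contradiction.
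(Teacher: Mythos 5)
Your proof is correct and follows essentially the same route as the paper's: both arguments hinge on realizing the sequence together with one of its limits as the continuous image of the compact Hausdorff space \(\omega+1\) and applying weak Hausdorffness to that image. You run the argument directly (the image is closed, hence contains \(y\), which you then exclude via \(T_1\) and a constant subsequence) where the paper argues contrapositively with a case split on whether \(y\) occurs as a term of the sequence, but the mathematical content is the same.
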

\begin{proof}
For the first implication, note that
the continuous image of compact is compact,
so if compacts are closed, the continuous
image of compact \(T_2\) spaces are closed.

The second implication was originally shown in
\cite{mathse4267169}. We will demonstrate a
slightly different proof by showing the
contrapositive. If a space \(Y\) is not \(US\),
then there must be some sequence \(a_n\) converging
to two distinct points \(x\) and \(y\).

Suppose
\(y\not=a_n\) for any \(n<\omega\). Consider
the compact Hausdorff space given by
\(X=\{0\}\cup\{2^{-n}:n<\omega\}\subseteq\mathbb R\).
Consider \(f:X\to Y\) where \(f(0)=x\) and
\(f(2^{-n})=a_n\). To prove continuity, we need
only worry about preimages of the neighborhoods
of \(x\), but by
definition any open neighborhood of \(x\) contains
cofinite \(a_n\), so its preimage in \(X\) is open.
Then \(f[X]=\{x\}\cup\{a_n:n<\omega\}\)
is not closed as it's missing its
limit point \(y\).

If \(y=a_N\), first consider the subsequence of
\(a_n\) missing \(y\). If this subsequence is
infinite, then the above proof may be repeated
using the subsequence instead. If the subsequence
is finite, then \(a_n=y\) for all but finitely-many
\(n\). Then \(\{y\}\) is a compact Hausdorff space
embedded in \(Y\) that is not closed, as
\(x\) is a limit point of that set.
\end{proof}

Brian Scott showed in \cite{964546} that
\(wH\) is productive, and in
\cite{466564} that the square of \(KC\) need not
be \(KC\). So the square of such a space
is \(wH\) but not \(KC\). We provide
the details here for the convenience of the reader.

\begin{lemma}\label{wHimage}
The continuous image of a compact \(T_2\) space
in a \(wH\) space is \(T_2\).
\end{lemma}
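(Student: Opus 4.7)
The plan is to take two distinct points $y_1,y_2$ in the image $f[X]$, where $f:X\to Y$ is continuous, $X$ is compact $T_2$, and $Y$ is $wH$, and build disjoint open neighborhoods of them in $f[X]$. The strategy is to exploit compact $T_2$ normality inside $X$ to separate the fibers, and then use the $wH$ hypothesis to push closed sets from $X$ down to closed sets in $Y$.

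First I would let $A_i=f^{-1}(y_i)$. These are closed in the compact $T_2$ space $X$, hence themselves compact and $T_2$, and also disjoint. Since $X$ is compact $T_2$ it is normal, so I can separate $A_1$ and $A_2$ by disjoint open sets $U_1,U_2\subseteq X$. The sets $U_i$ themselves are of no direct help (their images need not be open in $Y$), but their complements $C_i=X\setminus U_i$ are the right objects: each $C_i$ is closed in $X$, hence compact $T_2$, and by construction $A_i\cap C_i=\emptyset$ and $C_1\cup C_2=X$.

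Now I would invoke the $wH$ hypothesis on the restrictions $f\restriction C_i\colon C_i\to Y$: since each $C_i$ is compact $T_2$ and $Y$ is $wH$, the images $f[C_i]$ are closed in $Y$. Set $V_i=Y\setminus f[C_i]$, which is open in $Y$. Because $A_i\cap C_i=\emptyset$ means no preimage of $y_i$ lies in $C_i$, we have $y_i\in V_i$. And because $C_1\cup C_2=X$, we get $f[C_1]\cup f[C_2]=f[X]$, so $V_1\cap V_2\cap f[X]=\emptyset$. Thus $V_1\cap f[X]$ and $V_2\cap f[X]$ are disjoint open neighborhoods of $y_1,y_2$ in $f[X]$, proving $f[X]$ is $T_2$.

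I do not expect a serious obstacle here: the only subtle point is remembering that $U_1,U_2$ themselves are the wrong tool (their $f$-images need not be open), so the trick is to pass to the closed complements and apply $wH$ to those. Everything else is bookkeeping with fibers and the normality of compact $T_2$ spaces.
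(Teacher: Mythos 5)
Your proof is correct and matches the paper's argument essentially step for step: separate the two fibers by normality of the compact Hausdorff domain, apply weak Hausdorffness to the images of the closed complements \(X\setminus U_i\), and recover disjoint open neighborhoods by complementation in \(f[X]\). The only step worth making explicit is that the fibers \(f^{-1}(y_i)\) are closed because \(Y\), being \(wH\), is \(T_1\) (the paper notes this); otherwise your covering observation \(C_1\cup C_2=X\) gives the disjointness a touch more cleanly than the paper's element-chasing contradiction.
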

\begin{proof}
Consider a compact \(T_2\) space
\(X\) and a continuous map
\(f:X\to Y\) with \(Y\) \(wH\).

Then pick \(y_0,y_1\in f[X]\).
Note that as \(Y\) is \(T_1\),
\(f^\leftarrow[\{y_0\}]\)
and \(f^\leftarrow[\{y_1\}]\) are disjoint closed
subsets of a compact \(T_2\), and therefore \(T_4\),
space. Separate these by open \(U_0,U_1\subseteq X\).

Then \(X\setminus U_1,X\setminus U_0\) are compact
and \(T_2\),
and thus \(f[X\setminus U_1],f[X\setminus U_0]\)
are closed. Consider
\(V_0=f[X]\setminus f[X\setminus U_1]\) and
\(V_1=f[X]\setminus f[X\setminus U_0]\). These sets
are open in \(f[X]\), with \(y_0\in V_0\) and
\(y_1\in V_1\). Note also that if \(z\in V_0\cap V_1\),
then there is some \(x\in X\) with \(f(x)=z\);
\(f(x)\not\in f[X\setminus U_0]\), and thus
\(x\not\in X\setminus U_0\);
\(f(x)\not\in f[X\setminus U_1]\), and thus
\(x\not\in X\setminus U_1\). But then
\(x\in U_0\cap U_1\), a contradiction.
\end{proof}

\begin{lemma}
The product of \(wH\) spaces is \(wH\).
\end{lemma}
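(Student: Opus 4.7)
The plan is to reduce the claim to the classical fact that a continuous map from a compact space into a Hausdorff space has closed image, using Lemma~\ref{wHimage} to promote the coordinate projections of the image from merely closed to \(T_2\).

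In detail, let \(f:X\to\prod_\alpha Y_\alpha\) be a continuous map with \(X\) compact \(T_2\) and each \(Y_\alpha\) a \(wH\) space. For each index \(\alpha\), the composition \(\pi_\alpha\circ f:X\to Y_\alpha\) is a continuous map from a compact \(T_2\) space into a \(wH\) space. By the definition of \(wH\) its image \(\pi_\alpha[f[X]]\) is closed in \(Y_\alpha\), and by Lemma~\ref{wHimage} that image is \(T_2\) as well; it is also compact, being a continuous image of \(X\).

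Next I would form \(Z=\prod_\alpha\pi_\alpha[f[X]]\). By Tychonoff, \(Z\) is compact and \(T_2\), and since a product of closed subspaces is closed, \(Z\) sits as a closed subspace of \(\prod_\alpha Y_\alpha\). The map \(f\) factors through \(Z\) (the codomain restriction is continuous because its composition with each projection \(\pi_\alpha|_Z\) agrees with \(\pi_\alpha\circ f\)); since a continuous map from a compact space into a Hausdorff space has closed image, \(f[X]\) is closed in \(Z\), hence closed in \(\prod_\alpha Y_\alpha\).

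I do not anticipate a real obstacle here: all of the substantive content has already been packaged into Lemma~\ref{wHimage}, which supplies the Hausdorff target that the classical closed-map argument needs. Without that lemma the closedness of each \(\pi_\alpha[f[X]]\) in \(Y_\alpha\) would not be enough to conclude; with it, the remaining steps are a routine Tychonoff-plus-compact-into-Hausdorff chase.
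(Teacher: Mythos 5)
Your proposal is correct and follows essentially the same route as the paper: project to each factor, use Lemma~\ref{wHimage} to see each coordinate image is compact Hausdorff (and closed), form the product \(Z\) of these images as a closed Hausdorff subspace of \(\prod_\alpha Y_\alpha\), and conclude that the compact set \(f[X]\subseteq Z\) is closed in \(Z\) and hence in the full product. The only cosmetic difference is that you make the ``factors through \(Z\)'' step and the Tychonoff citation explicit, which the paper leaves implicit.
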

\begin{proof}
Let \(\mathcal Y\) be a collection of \(wH\) spaces,
\(X\) be compact \(T_2\), and
\(f:X\to\prod\mathcal Y\) be continuous. For each
\(Y\in\mathcal Y\), the projection \(\pi_Y\) is
continuous, so \(\pi_Y\circ f:X\to Y\) is continuous.
Since \(Y\) is \(wH\), \(\pi_Y\circ f[X]\) is
closed, compact, and \(T_2\).

It follows that
\(Z=\prod\{\pi_Y\circ f[X]:Y\in\mathcal Y\}\)
is a closed, compact, and \(T_2\) subspace of
\(\prod\mathcal Y\). Importantly,
\(f[X]\subseteq Z\) is a compact subspace
of a \(T_2\) space, and thus closed in \(Z\).
Therefore, \(f[X]\) is closed in
\(\prod\mathcal Y\).
\end{proof}

\begin{theorem}\label{non-kc-square}
The square of a compact non-\(T_2\) space
is not \(KC\).
\end{theorem}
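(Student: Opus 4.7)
The plan is to use the diagonal $\Delta = \{(x,x) : x \in X\}$ as the witnessing compact-but-not-closed subset of $X \times X$. This is the standard construction, and it exploits the classical characterization that a space $X$ is Hausdorff if and only if $\Delta$ is closed in $X \times X$.

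First I would observe that $X \times X$ is compact as a product of two compact spaces (Tychonoff, or the elementary two-factor version). Next, the diagonal map $x \mapsto (x,x)$ is a continuous embedding of $X$ onto $\Delta$ with continuous inverse (given by either projection restricted to $\Delta$), so $\Delta$ is homeomorphic to $X$ and in particular is compact as a subspace of $X \times X$.

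Finally I would invoke the standard equivalence: $\Delta$ is closed in $X \times X$ iff $X$ is $T_2$. Since $X$ is assumed not to be $T_2$, there exist $x \neq y$ in $X$ that cannot be separated by disjoint open neighborhoods, so every basic open neighborhood $U \times V$ of $(x,y) \notin \Delta$ satisfies $U \cap V \neq \emptyset$, producing a point $(z,z) \in \Delta \cap (U \times V)$. Thus $(x,y)$ lies in the closure of $\Delta$ but not in $\Delta$, so $\Delta$ is not closed. This exhibits a compact non-closed subset of $X \times X$, showing $X \times X$ fails $KC$.

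I expect no serious obstacle here: the argument is essentially the classical Hausdorff-diagonal criterion, and the only thing worth stating carefully is the observation that $\Delta$ inherits compactness via its homeomorphism with $X$.
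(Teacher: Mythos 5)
Your proposal is correct and follows exactly the paper's argument: the diagonal is compact because it is homeomorphic to $X$, and it is not closed because $X$ is not $T_2$, so the square fails $KC$. The extra detail you supply (unpacking why non-$T_2$ forces the diagonal to have a limit point outside itself) is fine but not needed beyond citing the standard diagonal criterion.
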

\begin{proof}
The diagonal of a space is homeomorphic to the space,
so the diagonal of a compact space is
compact. But a space is \(T_2\) if and only if
its diagonal is closed, so the diagonal of a
compact non-\(T_2\) space is compact but not
closed.
\end{proof}

\begin{example}[S31]\label{S31}
The square of the
one-point compactification of the rationals
is \(wH\), because the one-point compactification
of the rationals is \(KC\) and therefore
\(wH\).

But since the one-point compactification of
the rationals is not \(T_2\),
its square is not \(KC\) by the previous
theorem.
\end{example}

A slight modification of a common counterexample
yields a space which is US but not \(wH\).

\begin{example}[S23]\label{S23}
The \term{Arens-Fort space} is
\(\omega^2\cup\{z\}\) where \(\omega^2\) is
discrete, and neighborhoods of \(z\) contain
all but a finite number of points from all but
a finite number of columns. This space is \(T_2\)
and thus \(KC\). It is also not
locally compact: it is anticompact while \(\infty\)
has no finite neighborhoods.
\end{example}

\begin{example}[S165]
The one-point compactification \(X=Y\cup\{\infty\}\)
of the Arens-Fort
space \(X\) is \(US\) but not \(wH\). \(US\) is a
result of Theorem 4 from \cite{MR0208557}:
the one-point compactification of any \(KC\) space
is \(US\).

The space is not \(wH\) because it contains
a compact \(T_2\) space which is not closed
(since inclusion is a continuous function). Namely,
\(Y\setminus\{z\}=\omega^2\cup\{\infty\}\)
is a copy of the one-point
compactification of a discrete countable space; equivalently,
a copy of a non-trivial converging sequence in \(\mathbb R\).
But \(z\) is a limit point of this
set, so it is not closed.
\end{example}

\section{\(k\)-Hausdorff, revisited}\label{k2Hsec}

We have established the existence of multiple weakly Hausdorff
spaces that are not \(k\)-Hausdorff (S29, S31, S97). So
the following quote from \cite{rezkcompactly} may be surprising:
``Every weak Hausdorff space is \(k\)-Hausdorff.''

Regrettably, there are multiple inequivalent notions of ``\(k\)-Hausdorff''
in the literature. We will first establish the pattern they all appear 
to follow.

\begin{definition}\label{kiH}
A space \(X\) is said to be \term{\(k_i\)-Hausdorff} or \(k_iH\) provided that
the diagonal \(\Delta_X=\{(x,x):x\in X\}\) is \(k_i\)-closed in the product topology
on \(X^2\).
\end{definition}

In turn, we are now obligated to define \(k_i\)-closed.

\begin{definition}
A subset \(C\) of a space is \term{\(k_1\)-closed} provided for every
compact subset \(K\) of the space, the intersection \(C\cap K\)
is closed in the subspace topology for \(K\).
\end{definition}

\begin{theorem}[2.1 of \cite{MR0374322}]
The criteria for \(k_1\)-Hausdorff given in Definitions \ref{k1H} and \ref{kiH} are
equivalent.
\end{theorem}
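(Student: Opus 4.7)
The plan is to prove both directions directly from the definitions, using the classical fact that a space $Y$ is $T_2$ if and only if $\Delta_Y$ is closed in $Y^2$.

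For the direction $(\ref{k1H}\Rightarrow\ref{kiH})$, I would start with a compact subspace $K\subseteq X^2$ and try to produce a compact subspace of $X$ whose square contains $K$. The natural candidate is $L=\pi_1[K]\cup\pi_2[K]$, where $\pi_1,\pi_2:X^2\to X$ are the coordinate projections. Since each $\pi_i[K]$ is a continuous image of a compact set and compactness is closed under finite unions, $L$ is compact in $X$, and by hypothesis $L$ is $T_2$. Hence $\Delta_L$ is closed in $L^2$. Then I would observe $K\subseteq L^2$, and that the subspace topology on $K$ inherited from $X^2$ agrees with the one inherited from $L^2$ (since subspaces compose). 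Finally, $\Delta_X\cap K=\Delta_L\cap K$ is closed in $K$ as the restriction of a closed subset of $L^2$.

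For the direction $(\ref{kiH}\Rightarrow\ref{k1H})$, I would take a compact subspace $K\subseteq X$ and note that $K^2\subseteq X^2$ is compact (the finite product of compact spaces is compact, with no choice required). By hypothesis $\Delta_X\cap K^2$ is closed in $K^2$, and $\Delta_X\cap K^2=\Delta_K$, so $K$ has closed diagonal in $K^2$ and is therefore $T_2$.

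The only real obstacle is bookkeeping about subspace topologies: the argument for $(\ref{k1H}\Rightarrow\ref{kiH})$ needs that closedness of $\Delta_L$ in $L^2$ transfers to closedness of $\Delta_L\cap K$ in $K$ with the $X^2$-subspace topology, which is exactly transitivity of the subspace construction. Nothing else in the proof should require more than the identification $\Delta_X\cap L^2=\Delta_L$ and the basic characterization of $T_2$ via the diagonal.
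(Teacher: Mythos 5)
Your proof is correct. The paper itself gives no argument for this statement---it simply cites Theorem 2.1 of the referenced work---so there is nothing to compare against directly; your two directions are the standard ones, and the key idea in the nontrivial direction, replacing a compact $K\subseteq X^2$ by the compact set $L=\pi_1[K]\cup\pi_2[K]$ in $X$ so that $K\subseteq L^2$ and $\Delta_X\cap K=\Delta_L\cap K$, is exactly the right move. The only facts you lean on beyond that (finite unions of compact sets are compact, the subspace topology on $L^2\subseteq X^2$ agrees with the product of the subspace topologies, transitivity of subspaces, and the diagonal characterization of $T_2$) are all standard and correctly invoked, so your write-up stands as a complete, self-contained proof.
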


So the notion of \(k\)-Hausdorff from \cite{rezkcompactly} is what we
would call \(k_2H\) according to the following definition.

\begin{definition}
A subset \(C\) of a space \(X\) is \term{\(k_2\)-closed} provided for every 
compact Hausdorff space \(K\) and continuous map \(f:K\to X\), \(f^\leftarrow[C]\)
is closed in \(K\).\footnote{
A set is \(k_3\)-closed if its intersection with each compact Hausdorff
subspace is closed in the subspace, but even indiscrete spaces have a
\(k_3\)-closed diagonal, so we do not investigate this topic further here.
}
\end{definition}

We will find the following characterization for \(k_2H\) convenient,
as like \(k_1H\)'s Definition \ref{k1H}, it does not require
considering the diagonal of the square.

\begin{theorem}[4.2.4 of \cite{rezkcompactly}]
A space \(X\) is \(k_2H\) if and only if for every 
compact Hausdorff space \(K\), continuous map \(f:K\to X\),
and points \(k_0,k_1\in K\) such that \(f(k_0)\not=f(k_1)\),
there exist open neighborhoods \(U_0,U_1\) of \(k_0,k_1\)
such that \(f[U_0]\cap f[U_1]=\emptyset\).
\end{theorem}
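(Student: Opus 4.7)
The plan is to prove the equivalence directly by unpacking the definition of $k_2$-closed applied to $\Delta_X \subseteq X^2$, and translating back and forth between maps into $X^2$ and maps into $X$.

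For the forward direction, assume $X$ is $k_2H$ and fix a compact Hausdorff $K$, continuous $f\colon K\to X$, and $k_0,k_1\in K$ with $f(k_0)\neq f(k_1)$. The natural move is to package $f$ with itself: form $g\colon K\times K\to X^2$ by $g(a,b)=(f(a),f(b))$, which is continuous with $K\times K$ compact Hausdorff. By assumption $g^\leftarrow[\Delta_X]=\{(a,b):f(a)=f(b)\}$ is closed in $K\times K$, and $(k_0,k_1)$ lies in its complement. Since the product topology has basic open neighborhoods of the form $U_0\times U_1$, we can find open $U_0\ni k_0$, $U_1\ni k_1$ with $(U_0\times U_1)\cap g^\leftarrow[\Delta_X]=\emptyset$, which is exactly $f[U_0]\cap f[U_1]=\emptyset$.

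For the reverse direction, assume the separation condition and fix a compact Hausdorff $L$ with continuous $g\colon L\to X^2$; we must show $g^\leftarrow[\Delta_X]$ is closed in $L$. Let $f_0=\pi_0\circ g$ and $f_1=\pi_1\circ g$. The obstacle here is that the hypothesis speaks about a single continuous map from a compact Hausdorff space to $X$, whereas we naturally have two such maps $L\to X$. The trick is to amalgamate them via a disjoint union: set $K=L\sqcup L$ (still compact Hausdorff) and define $f\colon K\to X$ by $f$ on the first copy equal to $f_0$ and $f$ on the second copy equal to $f_1$.

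Given $\ell\in L\setminus g^\leftarrow[\Delta_X]$, the points $k_0=(\ell,0)$ and $k_1=(\ell,1)$ in $K$ satisfy $f(k_0)=f_0(\ell)\neq f_1(\ell)=f(k_1)$. Applying the hypothesis yields open $U_0\ni k_0$, $U_1\ni k_1$ in $K$ with $f[U_0]\cap f[U_1]=\emptyset$. Letting $V_i=\{\ell'\in L:(\ell',i)\in U_i\}$ for $i=0,1$, both are open in $L$, their intersection $V_0\cap V_1$ contains $\ell$, and for any $\ell'\in V_0\cap V_1$ we have $f_0(\ell')\in f[U_0]$ and $f_1(\ell')\in f[U_1]$, forcing $f_0(\ell')\neq f_1(\ell')$, hence $\ell'\notin g^\leftarrow[\Delta_X]$. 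Thus $L\setminus g^\leftarrow[\Delta_X]$ is open, completing the proof.
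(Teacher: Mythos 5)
Your proof is correct. Note that the paper itself does not prove this statement---it is cited as Proposition 4.2.4 of Rezk's notes---so your argument fills in a deferred proof rather than paralleling one in the text. Both directions are sound: the forward direction correctly applies $k_2$-closedness of $\Delta_X$ to the map $(a,b)\mapsto(f(a),f(b))$ on the compact Hausdorff square $K\times K$ and extracts a basic box neighborhood of $(k_0,k_1)$ in the open complement; and the reverse direction correctly handles the only delicate point, namely that the hypothesis concerns a single map into $X$ while a map $g\colon L\to X^2$ yields two, by amalgamating $\pi_0\circ g$ and $\pi_1\circ g$ on the disjoint union $L\sqcup L$ (still compact Hausdorff) and pulling the resulting separating neighborhoods back to an open set $V_0\cap V_1$ around each point of the complement of $g^\leftarrow[\Delta_X]$. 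This is the standard argument for the equivalence and nothing is missing.
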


We now may establish this notion of \(k\)-Hausdorff as strictly
weaker than the property explored earlier.

\begin{theorem}
    \[wH\Rightarrow k_2H\Rightarrow US\]
\end{theorem}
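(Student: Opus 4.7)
The plan is to prove the two implications separately, in both cases working with the separation characterization of Theorem 4.2.4 rather than with the diagonal in the product.

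For \(wH\Rightarrow k_2H\): given a compact Hausdorff \(K\), a continuous \(f:K\to X\), and points \(k_0,k_1\in K\) with \(f(k_0)\not=f(k_1)\), Lemma \ref{wHimage} immediately gives that \(f[K]\) is \(T_2\). I would then separate \(f(k_0)\) from \(f(k_1)\) by disjoint opens \(V_0,V_1\subseteq f[K]\) and take \(U_i=f^\leftarrow[V_i]\). Continuity makes each \(U_i\) open and containing \(k_i\), and the identity \(f[f^\leftarrow[V_i]]=V_i\) (valid because \(V_i\subseteq f[K]\)) forces \(f[U_0]\cap f[U_1]=V_0\cap V_1=\emptyset\), giving the required witnesses.

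For \(k_2H\Rightarrow US\) I would argue the contrapositive by adapting the construction from the earlier \(wH\Rightarrow US\) proof. Suppose a sequence \((a_n)_{n<\omega}\) converges both to \(x\) and to \(y\) with \(x\not=y\). Let \(K\) be the disjoint union of two copies of the one-point compactification of \(\omega\), with points \(p_0,p_1,\dots,p_\infty\) and \(q_0,q_1,\dots,q_\infty\); this is compact Hausdorff. Define \(f:K\to X\) by \(f(p_\infty)=x\), \(f(q_\infty)=y\), and \(f(p_n)=f(q_n)=a_n\). Continuity at \(p_\infty\) is immediate from \(a_n\to x\) (preimages of neighborhoods of \(x\) contain cofinitely many \(p_n\)); continuity at \(q_\infty\) is symmetric; all other points of \(K\) are isolated. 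Taking \(k_0=p_\infty\) and \(k_1=q_\infty\), any open neighborhoods \(U_0,U_1\) must contain cofinitely many \(p_n\) and \(q_n\) respectively, so both \(f[U_0]\) and \(f[U_1]\) contain \(a_n\) for cofinitely many \(n\); their intersection is therefore nonempty, violating the criterion of Theorem 4.2.4.

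The step I expect to need the most care is the one that, in the analogous \(wH\Rightarrow US\) argument, forced a case split on whether \(y\) equals some \(a_n\). I am optimistic the split can be avoided here: because \(y\) is deliberately placed in the image of \(f\) via \(q_\infty\), the argument does not require \(f[K]\) to omit \(y\), and the single uniform estimate ``\(f[U_0]\cap f[U_1]\) contains cofinitely many \(a_n\)'' is insensitive to whether the sequence hits \(x\) or \(y\) infinitely often or is even eventually constant.
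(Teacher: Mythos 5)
Your proposal is correct and follows essentially the same route as the paper: the first implication is verbatim the paper's argument via Lemma \ref{wHimage}, and your disjoint union of two copies of the one-point compactification of \(\omega\) is exactly the paper's \(K=(\omega+1)\times 2\), with the same map and the same ``\(f(p_n)=a_n=f(q_n)\) for cofinitely many \(n\)'' contradiction. Your observation that the case split from the \(wH\Rightarrow US\) proof is unnecessary here is also borne out by the paper's proof.
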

\begin{proof}
The first implication is Proposition 11.2 of \cite{rezkcompactly}: given
\(f:K\to X\) and \(k_0,k_1\in K\) with \(f(k_0)\not=f(k_1)\), by Lemma \ref{wHimage}
\(f[K]\) is \(T_2\). So there exist disjoint open neighborhoods \(V_0,V_1\)
of \(f(k_0),f(k_1)\), and \(U_0=f^\leftarrow[V_0],U_1=f^\leftarrow[V_1]\) are open
neighborhoods of \(k_0,k_1\) with \(f[U_0]\cap f[U_1]=V_0\cap V_1=\emptyset\).

Now let \(X\) be \(k_2H\) and let $l_0,l_1$ be limits of $x_n\in X$. Let $K=(\omega+1)\times 2$, and let $f:K\to X$ be defined by $f(n,i)=x_n$, $f(\omega,0)=l_0$, and $f(\omega,1)=l_1$. To show this is continuous, we need only observe that inverse images of open subsets of $X$ that contain $(\omega,i)$ contain a cofinite subset of $\omega\times\{i\}$. This follows as for an inverse open image to contain $(\omega,i)$, the open set must contain the limit $l_i$ of $x_n$, and thus contain a final sequence of $x_n$, and thus the inverse open image contains a cofinite subset of $\omega\times 2$.

Finally, if $l_0=f(\omega,0)\not=f(\omega,1)=l_1$, then there would exist open neighborhoods $U_0,U_1$ of $(\omega,0),(\omega,1)$ with $f[U_0]\cap f[U_1]=\emptyset$. But this is impossible as there exists $n<\omega$ with $(n,0)\in U_0$ and $(n,1)\in U_1$, and $f(n,0)=x_n=f(n,1)$. Thus $l_0=l_1$, showing limits are unique and \(X\) is \(US\).
\end{proof}

These arrows do not reverse. For the second:

\begin{example}[S37]\label{S37}
Consider the space \(X=(\omega_1+1)\cup\{\omega_1'\}\), where $\omega_1+1$ has its order topology and 
$\omega_1'$ is a duplicate of \(\omega_1\).

To see that this space fails $k_2H$, consider the set $(\omega_1+1)\times 2$ and the map $(\alpha,i)\mapsto \alpha$ for $\alpha<\omega_1$, $(\omega_1,0)\mapsto\omega_1$, and $(\omega_1,1)\mapsto\omega_1'$. This map is continous, $f(\omega_1,0)\not=f(\omega_1,1)$, but there are no open neighborhoods $U_0,U_1$ of $(\omega_1,0),(\omega_1,1)$ with $f[U_0]\cap f[U_1]=\emptyset$.

The fact that this space is \(US\) follows from the fact that \(\omega_1+1\) is \(US\),
and no non-trivial sequences in \(\omega_1+1\) converge to \(\omega_1\).
\end{example}

To reject the converse of \(wH\Rightarrow k_2H\), we will use the following
lemma, which strengthens \cite[Theorem 4]{MR0208557} used earlier.

\begin{lemma}
The one-point compactification of a $KC$ space is $k_2H$.
\end{lemma}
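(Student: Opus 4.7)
The plan is to verify the $k_2H$ characterization directly: given a continuous $f:K\to X^*=X\cup\{\infty\}$ with $X$ a $KC$ space, $K$ compact Hausdorff, and $f(k_0)\ne f(k_1)$, I will produce open neighborhoods $U_0\ni k_0$ and $U_1\ni k_1$ in $K$ with $f[U_0]\cap f[U_1]=\emptyset$. The construction will be asymmetric: $U_1$ will come from normality of $K$, while $U_0$ will be defined as a preimage of an open set in $X^*$. This asymmetry is what lets the argument leverage $KC$ on $X$ without needing $X^*$ itself to have any separation stronger than $T_1$.

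Write $A=f^\leftarrow[\{f(k_0)\}]$, $B=f^\leftarrow[\{f(k_1)\}]$, and $F=f^\leftarrow[\{\infty\}]$; all three are closed in $K$ since $X^*$ is $T_1$. Because $f(k_0)\ne f(k_1)$, at most one of the two images is $\infty$, so after swapping $k_0$ and $k_1$ if necessary I may assume $f(k_1)\in X$; then $B$ is disjoint from the closed set $A\cup F$. Normality of the compact Hausdorff space $K$ supplies an open $U_1\supseteq B$ with $\overline{U_1}\cap(A\cup F)=\emptyset$.

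The $KC$ hypothesis enters here: since $\overline{U_1}$ is compact and misses $F$, its image $f[\overline{U_1}]$ is a compact subset of $X$, and hence closed in $X$. Consequently $X^*\setminus f[\overline{U_1}]$ is a basic open neighborhood of $\infty$ in $X^*$, so $U_0:=f^\leftarrow[X^*\setminus f[\overline{U_1}]]$ is open in $K$.

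Finally, I expect it to be routine to check $k_0\in U_0$ and $f[U_0]\cap f[U_1]=\emptyset$: the former reduces to $f(k_0)\notin f[\overline{U_1}]$, which holds whether $f(k_0)\in X$ (forced by $\overline{U_1}\cap A=\emptyset$) or $f(k_0)=\infty$ (forced by $f[\overline{U_1}]\subseteq X$), and the latter follows immediately from $f[U_0]\subseteq X^*\setminus f[\overline{U_1}]$ together with $f[U_1]\subseteq f[\overline{U_1}]$. The main subtlety is recognizing why the construction must be asymmetric: a symmetric normality-separation of $A$ from $B$ in $K$ yields disjoint $U_0,U_1$ but not disjoint $f[U_0],f[U_1]$, so one needs $KC$ in $X$ to promote the closed neighborhood $\overline{U_1}$ of $B$ in $K$ into a genuine neighborhood of $\infty$ in $X^*$.
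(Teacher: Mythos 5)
Your proof is correct, and it takes a cleaner route than the paper's. The paper splits into two cases: when both $f(k_0)$ and $f(k_1)$ lie in $X$, it uses regularity of $K$ to restrict $f$ to a closed neighborhood $\cl(U)$ avoiding $f^\leftarrow[\{\infty\}]$ and then invokes the previously established chain $KC\Rightarrow wH\Rightarrow k_2H$ applied to $X$ itself; only in the case $f(k_0)=\infty$ does it run the argument you give, separating $k_1$ from $f^\leftarrow[\{\infty\}]$, pushing forward the closure to get a compact (hence, by $KC$, closed) subset of $X$, and taking its complement as a neighborhood of $\infty$. You observe that this second construction works uniformly: by separating $B=f^\leftarrow[\{f(k_1)\}]$ from $A\cup F$ rather than just from $F$, the same complement $X^+\setminus f[\cl(U_1)]$ also captures $f(k_0)$ when $f(k_0)\in X$, so no case split is needed and no appeal to the $KC\Rightarrow k_2H$ implication (and hence to Lemma \ref{wHimage}) is required. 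What your approach buys is a self-contained, single-construction proof; what the paper's buys is reuse of machinery already on hand, at the cost of the case analysis. Your closing remark about why the construction must be asymmetric is also accurate: plain normality in $K$ separates preimages, not images, and $KC$ is exactly what converts the compact image $f[\cl(U_1)]$ into a closed set whose complement is open in $X^+$.
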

\begin{proof}
Let $X^+=X\cup\{\infty\}$ be the one-point compactification of $X$. Consider a compact Hausdorff $K$ with $f:K\to X^+$, and $k_0,k_1\in K$ with $f(k_0)\not= f(k_1)$. In the case that $f(k_t)\not=\infty$ for $t\in\{0,1\}$, consider the closed subset $f^\leftarrow[\{\infty\}]$ of $K$. Since $K$ is regular, we may choose open $U$ with $\{k_0,k_1\}\subseteq U\subseteq cl(U)\subseteq K\setminus f^\leftarrow[\{\infty\}]$. Then $f\upharpoonright cl(U):cl(U)\to X$ is a continuous map from compact Hausdroff $cl(U)$ to $KC$ and therefore $k_2H$ $X$, so there exist open (in $cl(U)$) neighborhoods $V_0,V_1$ of $k_0,k_1$ with $f[V_0]\cap f[V_1]=\emptyset$. It follows that $U_0=V_0\cap U,U_1=V_1\cap U$ are open neighborhoods of $k_0,k_1$ in $K$, and $f[U_0]\cap f[U_1]\subseteq f[V_0]\cap f[V_1]=\emptyset$.

Otherwise we have, say, $k_0$ with $f(k_0)=\infty$. Since $X^+$ is $T_1$, $\{\infty\}$ is closed and $f^\leftarrow[\{\infty\}]$ is closed. Thus $V_{1}=K\setminus f^\leftarrow[\{\infty\}]$ is an open neighborhood of $k_{1}$. Since $K$ is regular, choose $U_{1}$ open with $k_{1}\in U_{1}\subseteq cl(U_{1})\subseteq V_{1}= K\setminus f^\leftarrow[\{\infty\}]$. Then $cl(U_{1})$ is compact and thus $f[cl(U_{1})]$ is compact and misses $\infty$, and thus is a compact and closed subset of $X$. It follows that $W_0=X^+\setminus f[cl(U_{1})]$ is an open neighborhood of $\infty=f(k_0)$, so $U_0=f^\leftarrow[W_0]$ is an open neighborhood of $k_0$.

So we have open neighborhoods $U_0,U_{1}$ for $k_0,k_{1}$, and $f[U_0]\cap f[U_{1}]\subseteq f[cl(U_0)]\cap(X^+\setminus f[cl(U_0)])=\emptyset$.
\end{proof}

\begin{example}[S165]
The one-point compactification of the Arens-Fort space (S23, Example \ref{S23}) was shown
earlier to fail \(wH\), but satisfies \(k_2H\)
by the preceding lemma.
\end{example}

We now have the following corollary.

\begin{corollary}\label{impl-chain}
\[T_2\Rightarrow k_1H\Rightarrow KC\Rightarrow
wH\Rightarrow k_2H\Rightarrow US\Rightarrow T_1\]
with no implications reversing.
\end{corollary}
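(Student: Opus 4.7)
The corollary is a summary; the proof is essentially citation management, with no new content beyond what has already been assembled in the preceding three sections. The plan is to separately handle the forward chain and the six non-reversals.

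For the chain of implications, I would concatenate four already-proved results: $T_2\Rightarrow k_1H\Rightarrow KC$ from the theorem in Section 1, $KC\Rightarrow wH\Rightarrow US$ from the theorem opening Section 2, $wH\Rightarrow k_2H\Rightarrow US$ from the theorem proved in Section 3 after the definition of $k_2$-closed, and $US\Rightarrow T_1$ from Wilansky's proposition quoted in the introduction. Concatenating these gives the displayed chain.

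For the non-reversals, I would walk down the chain and cite a single witness at each level. The arrow $T_1\Rightarrow US$ fails to reverse by Wilansky's proposition. The arrow $US\Rightarrow k_2H$ fails to reverse by Example \ref{S37}, namely $(\omega_1+1)\cup\{\omega_1'\}$. The arrow $k_2H\Rightarrow wH$ fails to reverse by S165, the one-point compactification of the Arens-Fort space, which was shown to be $k_2H$ via the preceding lemma on one-point compactifications of $KC$ spaces but not $wH$ by its earlier appearance in Section 2. The arrow $wH\Rightarrow KC$ fails to reverse by Example \ref{S31}, the square of the one-point compactification of the rationals. The arrow $KC\Rightarrow k_1H$ fails to reverse by Example \ref{S29}, the one-point compactification of the rationals itself. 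Finally, $k_1H\Rightarrow T_2$ fails to reverse by Example \ref{S17}, the cocountable topology on an uncountable set.

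Since every ingredient is already in place, there is no genuine mathematical obstacle. The closest thing to a pitfall is bookkeeping: the S165 example is referenced in two different places for two different reasons, and one must combine both to obtain the non-reversal of $k_2H\Rightarrow wH$; similarly, one must remember that S31 separates $wH$ from $KC$, not from $k_1H$, in order to avoid over-claiming.
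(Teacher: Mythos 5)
Your proposal is correct and matches the paper's intent exactly: the corollary is stated without proof as an immediate consequence of the preceding theorems and examples, and you have correctly identified the four implication theorems to concatenate and the right witness for each of the six non-reversals (Wilansky for $T_1\not\Rightarrow US$, S37, S165, S31, S29, S17). The only nitpick is phrasing: where you write ``the arrow $T_1\Rightarrow US$ fails to reverse'' you mean that the converse of $US\Rightarrow T_1$ fails, which is clearly what you intended.
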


\section{Retracts are Closed}

In \cite{191016} the following property was introduced.

\begin{definition}
A \term{retract} \(R\) of a space \(X\) is a subspace for which
a continuous \(f:X\to R\) exists with \(f(r)=r\) for all
\(r\in R\).
\end{definition}

\begin{definition}[P101]
A space is \term{RC} provided each retract subspace
is closed.
\end{definition}

This provides another example of a property between
\(T_1\) and \(T_2\).

\begin{theorem}
\[T_2\Rightarrow RC\Rightarrow T_1\]
\end{theorem}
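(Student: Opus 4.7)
The plan is to handle the two implications separately, with the second being essentially a one-line observation.

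For $T_2\Rightarrow RC$, I would take a retract $R\subseteq X$ with retraction $f:X\to R$ and argue that $R$ is closed by showing its complement is open. The key point is that in a $T_2$ space, any $x\notin R$ satisfies $f(x)\in R$ and hence $f(x)\neq x$; applying Hausdorffness, separate $x$ and $f(x)$ by disjoint open $U,V\subseteq X$. By continuity of $f$ (viewed as a map into $X$ via the subspace inclusion), shrink $U$ to an open $W\ni x$ with $f[W]\subseteq V$. Then $W\cap U$ is an open neighborhood of $x$ that misses $R$ entirely: any $r\in W\cap U\cap R$ would give $r=f(r)\in V$, contradicting $U\cap V=\emptyset$. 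This is the standard ``equalizer is closed in a Hausdorff space'' argument.

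For $RC\Rightarrow T_1$, I would observe that every singleton $\{x\}\subseteq X$ is trivially a retract, via the constant map $f:X\to\{x\}$ sending every point to $x$ (constant maps are continuous, and $f(x)=x$). Hence if all retracts are closed, every singleton is closed, which is precisely the $T_1$ condition.

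No step here presents a real obstacle; the only mild subtlety is being careful in the first implication about whether to view $f$ as a map $X\to R$ or $X\to X$, but composition with the continuous subspace inclusion makes this harmless. I would present the proof as two short paragraphs, one per implication.
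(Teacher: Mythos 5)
Your proposal is correct and follows essentially the same route as the paper: the first implication is the standard ``equalizer is closed'' argument (the paper's open set $W=U\cap f^\leftarrow[V]$ is exactly your shrunken neighborhood), and the second implication uses the constant retraction onto a singleton just as the paper does. No substantive differences.
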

\begin{proof}
Given a \(T_2\) space \(X\), and consider a retract \(R\subseteq X\)
and witnessing map \(f:X\to R\). Given \(x\in X\setminus R\), note
\(f(x)\not=x\). So choose open \(U,V\) with \(x\in U,f(x)\in V\).

Consider now \(W=U\cap f^\leftarrow[V]\). This is an open neighborhood
of \(x\). Furthermore, for each \(r\in R\cap U\), \(f(r)=r\in U\),
and thus \(f(r)\not\in V\). Thus
\(R\cap W=R\cap U\cap f^\leftarrow[V]=\emptyset\), showing \(W\)
is an open neighborhood of \(x\) missing \(R\). Thus \(R\)
is closed.

Finally let \(X\) be \(RC\). Then for each \(x\in X\), note
\(f:X\to\{x\}\) is continuous, and thus \(\{x\}\) is closed,
showing \(X\) is \(T_1\).
\end{proof}

However, this property cannot be placed into the chain of
implications given in Corollary \ref{impl-chain}.

\begin{example}
In \cite{MR4614765}, the authors provide an example of a
space which is \(RC\) (in fact, ``strongly rigid'' as
all continuous self-maps are either constant or the identity,
see their Claim 4.9),
but is not \(KC\). An earlier version of this construction,
available within arXiv preprint \verb|2211.12579v3|,
was in fact not \(US\).
\end{example}

We do not delve into the details of the preceding construction
as they are fairly technical. The author suspected that
a more elementary example could be constructed, which
was eventually provided by Marshall Williams in a response
to the author's post to MathOverflow \cite{460346}:

\begin{example}[S192]
Let $X=[0,\infty)\cup \{\infty_1,\infty_2\}$
where $[0,\infty)$ is an open subspace with its usual
Euclidean topology, $\infty_1$ has neighborhoods of the form 
$\{\infty_1\}\cup (a,\infty)\backslash 2\mathbb Z$, 
and $\infty_2$ has neighborhoods of the form 
$\{\infty_2\}\cup (a,\infty)\backslash (2\mathbb Z +1)$.
Observe that this space is connected, and that
it fails to be \(US\) as the sequence \(a_n=n+\frac{1}{2}\) 
converges to both $\infty_1$ and $\infty_2$.

Note then that if $A$ is a retract of $X$, then
$\operatorname{cl}(A)\cap [0,\infty)=A\cap [0,\infty)$.
If $\infty_1$ is a limit point of $A$, we have that
$A\setminus 2\mathbb Z$ is unbounded.
Furthermore, since
$A$ is the continuous image of connected $X$, we have
$A\supseteq[a,\infty)$ for some $a$.
Thus $\infty_1\in\operatorname{cl}(A)$ implies $\infty_1\in A$.
A similar argument shows
$\infty_2\in\operatorname{cl}(A)$ implies $\infty_2\in A$,
and therefore $\operatorname{cl}(A)=A$.

% Suppose now that $\infty_1\in \overline{A}$. Then either $\infty_1\in A$, or $A$ contains an unbounded subset of $[0,\infty)$. 

% In the latter case, since $X$ is connected, the retract $A$ is connected, so $A\supseteq [a,\infty)$ for some $a$.  Therefore the sequence $2n+1$ eventually lies in $A$, so since $2n+1$ has a unique limit of $\infty_1$, we again have $\infty_1\in A$.

% The same argument applies to $\infty_2$ via the sequence $2n$, and so the retract $A$ is closed.  

% Thus $X$ is **RC**.

\end{example}

We can see that \(k_1H\) is not sufficient to imply
\(RC\).

\begin{example}[S17, Example \ref{S17}]
Earlier we noted that
the co-countable topology on an uncountable
set is \(k_1H\) but not \(T_2\). In fact,
the space fails to be \(RC\): partition this space
into pairs of points, and consider a map that
sends each pair to a single point within
their set. Then this image is uncountable and
therefore not closed. But this image is a retract as
the map is continuous: given a co-countable set,
the inverse image is still co-countable.
\end{example}

It's important that S17 is not compact, given the following
result.

\begin{proposition}
All compact \(KC\) spaces are \(RC\).
\end{proposition}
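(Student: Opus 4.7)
The plan is to chase the definitions and observe that the proof reduces to a one-line application of the $KC$ hypothesis once we note that a retract of a compact space is itself compact.

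First I would let $X$ be compact and $KC$, let $R\subseteq X$ be a retract, and let $f:X\to R$ be the continuous witness with $f(r)=r$ for $r\in R$. The key observation is that because $f$ fixes $R$ pointwise, we have $f[X]=R$. Since the continuous image of a compact space is compact, $R$ is compact as a subspace of $X$.

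Then I would invoke the $KC$ hypothesis directly: $R$ is a compact subset of the $KC$ space $X$, so $R$ is closed in $X$. Since $R$ was an arbitrary retract, this shows $X$ is $RC$.

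There is no real obstacle here; the argument is a two-step unwinding of the definitions (retract yields a continuous surjection from $X$, which transports compactness to $R$, which then combines with $KC$). The only thing to be mildly careful about is that ``compact'' for the subspace $R$ and ``compact'' as the codomain of $f$ agree, which is immediate since compactness is intrinsic to the subspace topology.
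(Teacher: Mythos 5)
Your proof is correct and is essentially the paper's own argument: the paper's one-line proof likewise notes that a retract of a compact space is compact (via the continuous surjection onto the retract) and then applies $KC$. You have simply spelled out the details more explicitly.
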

\begin{proof}
Every retract of a compact space is compact,
so every retract of a compact \(KC\) space is closed.
\end{proof}

But \(KC\) cannot be weakened to \(wH\) here.

\begin{proposition}
The square of a non-\(T_2\) space is not \(RC\).
\end{proposition}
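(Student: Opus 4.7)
The plan is to reuse the same diagonal trick that powered Theorem \ref{non-kc-square}. Given a non-\(T_2\) space \(X\), the diagonal \(\Delta_X=\{(x,x):x\in X\}\) is a copy of \(X\) sitting inside \(X^2\), and because \(X\) fails \(T_2\) it is not closed in \(X^2\). So my target is to exhibit \(\Delta_X\) as a retract of \(X^2\); then \(X^2\) has a non-closed retract and \(RC\) fails.

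The natural retraction is \(r:X^2\to\Delta_X\) defined by \(r(x,y)=(x,x)\). First I would verify this is well-defined into \(\Delta_X\), which is immediate, and that \(r(p)=p\) for all \(p\in\Delta_X\), which is also immediate. Continuity is the only real check: the first projection \(\pi_1:X^2\to X\) is continuous, the map \(x\mapsto(x,x)\) is a homeomorphism from \(X\) onto \(\Delta_X\) (with the subspace topology from \(X^2\)), and \(r\) factors as the composition of these two. Alternatively, one can check directly that for \(U\times V\) basic open in \(X^2\), the preimage under the diagonal embedding of \(U\times V\cap\Delta_X\) pulled back through \(r\) is \(\pi_1^\leftarrow[U\cap V]\), which is open.

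With \(\Delta_X\) established as a retract, I would close the argument by invoking the standard characterization that a space \(X\) is \(T_2\) iff \(\Delta_X\) is closed in \(X^2\) (already cited inside the proof of Theorem \ref{non-kc-square}). Since \(X\) is not \(T_2\), the retract \(\Delta_X\) is not closed in \(X^2\), so \(X^2\) is not \(RC\).

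There is no real obstacle here; the only thing to be slightly careful about is not conflating \(\Delta_X\) as a subspace of \(X^2\) with \(X\) itself when one checks continuity of \(r\), but once the homeomorphism \(X\cong\Delta_X\) is made explicit the verification is routine.
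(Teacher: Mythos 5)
Your proposal is correct and is essentially the paper's own proof: both exhibit the diagonal as a retract of the square via \((x,y)\mapsto(x,x)\) and conclude from the fact that a space is \(T_2\) iff its diagonal is closed. You simply spell out the continuity check in more detail than the paper does.
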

\begin{proof}
The diagonal is a retract of the square by the continuous
map \((x,y)\mapsto(x,x)\), and a space is \(T_2\) if and
only if its diagonal is closed.
\end{proof}

Note that the previous two propositions provide an alternative
proof for Theorem \ref{non-kc-square}.

\begin{example}[S31, Example \ref{S31}]
The square of the one-point compactification of the rationals
was noted earlier to be \(wH\), but fails to be \(RC\)
by the preceding proposition.
\end{example}

We leave open the following question:

\begin{question}
Does there exist a compact \(RC\) space which is not
\(KC\)?
\end{question}

\section{Semi-Hausdorff}

The semi-Hausdorff property was introduced in \cite{banakh2017each}.

\begin{definition}
An open set is \term{regular open} if it equals the interior of its
closure.
\end{definition}

\begin{definition}[P10]
A space is \term{semiregular} if it has a basis of
regular open sets.
\end{definition}

\begin{definition}[P169]
A space is \term{semi-Hausdorff} or \(sH\) if for each
pair of distinct points \(x,y\), there exists a
regular open neighborhood of \(x\) missing \(y\).
\end{definition}

\begin{proposition}
All \(T_1\) semiregular spaces are semi-Hausdorff.
\end{proposition}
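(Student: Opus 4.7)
The plan is to give a direct argument combining the two hypotheses in the most straightforward way. Given distinct points $x,y$ in a $T_1$ semiregular space $X$, the first step is to use $T_1$ to produce an open neighborhood of $x$ missing $y$: since $\{y\}$ is closed, the set $X\setminus\{y\}$ is open and contains $x$.

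Next I would invoke the semiregular hypothesis. Since the collection of regular open sets forms a basis, there exists a basic regular open $U$ with $x\in U\subseteq X\setminus\{y\}$. This $U$ is then a regular open neighborhood of $x$ that misses $y$, which is exactly the semi-Hausdorff condition for the pair $x,y$.

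There is no real obstacle here; the proof is essentially a one-line application of the definitions, and the only subtlety is recognizing that ``basis of regular open sets'' lets us shrink any open neighborhood to a regular open one containing the prescribed point.
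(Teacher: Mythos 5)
Your argument is correct and matches the paper's proof essentially verbatim: both use $T_1$ to obtain an open neighborhood of $x$ missing $y$ and then shrink it to a regular open neighborhood using the semiregular basis. No issues.
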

\begin{proof}
Given \(T_1\), there exists an open neighborhood \(U\)
of \(x\) missing \(y\). Then by semiregular, \(U\) contains
a regular open neighborhood \(V\) of \(x\) missing \(y\).
\end{proof}

\begin{theorem}
\[T_2\Rightarrow sH\Rightarrow T_1\]
\end{theorem}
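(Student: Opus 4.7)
The plan is to handle each implication separately using elementary manipulations of interior and closure. For $T_2 \Rightarrow sH$, given distinct $x,y$ in a Hausdorff space $X$, I would first use the Hausdorff property to select disjoint open neighborhoods $U \ni x$ and $V \ni y$. The candidate regular open neighborhood of $x$ will be $W = \operatorname{int}(\operatorname{cl}(U))$. It is a standard fact that the interior of the closure of any open set is regular open (applying interior-closure to an open set yields an operator which is idempotent on open sets), so $W$ is regular open. Clearly $x \in U \subseteq W$. To see that $y \notin W$, observe that $U$ is contained in the closed set $X \setminus V$, so $\operatorname{cl}(U) \subseteq X \setminus V$, and therefore $W \subseteq \operatorname{cl}(U) \subseteq X \setminus V$, which misses $y$.

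For $sH \Rightarrow T_1$, I would argue directly from the definition: given distinct $x,y$, the semi-Hausdorff property applied to the ordered pair $(x,y)$ yields a regular open (in particular, open) neighborhood of $x$ missing $y$, and applied to $(y,x)$ yields an open neighborhood of $y$ missing $x$. This is exactly the $T_1$ condition.

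There is no serious obstacle here; the only subtlety worth stating explicitly is the fact that $\operatorname{int}(\operatorname{cl}(U))$ is regular open whenever $U$ is open, which follows because $\operatorname{cl}(\operatorname{int}(\operatorname{cl}(U))) = \operatorname{cl}(U)$ for open $U$ (using $U \subseteq \operatorname{int}(\operatorname{cl}(U)) \subseteq \operatorname{cl}(U)$ and taking closures), so applying interior once more recovers $\operatorname{int}(\operatorname{cl}(U))$ itself.
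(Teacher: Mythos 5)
Your proposal is correct and follows essentially the same route as the paper: both use the disjoint open sets $U,V$ from $T_2$ and take $\operatorname{int}(\operatorname{cl}(U))$ as the regular open neighborhood separating $x$ from $y$, and both note that $sH\Rightarrow T_1$ is immediate from the definition. You simply spell out the standard fact that $\operatorname{int}(\operatorname{cl}(U))$ is regular open for open $U$, which the paper leaves implicit.
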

\begin{proof}
Given points \(x,y\), let \(U,V\) be disjoint open neighborhoods.
Then \(\cl(U)\cap V=\emptyset\),
so \(\int(\cl(U))\) is a regular open neighborhood of \(x\) missing 
\(y\). The second implication is immediate from the definitions.
\end{proof}

\begin{example}
The \(RC\)-not-\(KC\) space constructed in \cite{MR4614765}
is \(sH\). Likewise, its arXiv
arXiv preprint \verb|2211.12579v3|
includes a construction which is \(RC\), \(sH\), but not \(US\).
\end{example}

In this case we may return to a more straightforward example
to show not all \(sH\) spaces are \(T_2\).

\begin{example}[S97, Example \ref{S97}]
Earlier this space was shown to be \(KC\), and therefore
\(T_1\). The space is also compact and thus \(RC\).

Singletons
are regular open neighborhoods for points of \(\omega^2\).
The open neighborhoods of \(\infty_y\) missing at least the first row of
\(\omega^2\) are regular open, and the open neighborhoods
of \(\infty_x\) missing at least the first column of \(\omega^2\)
are regular open.

Since the space is semiregular and
\(T_1\), the space is \(sH\).
(Note that the authors of \cite{steen1978counterexamples}
assume \(T_2\) in their
definition for semiregular, explaining why it is marked
as false in their reference chart.)
\end{example}

\begin{example}[e.g. S37, Example \ref{S37}]
Consider any \(T_2\) space \(X\) with a non-isolated
point \(x\in X\).
Let \(X'=X\cup\{x'\}\) have the topology generated by
the original open subsets of \(X\), and for each open
neighborhood \(U\) of \(x\), let \(\{x'\}\cup U\setminus\{x\}\)
be a basic open neighborhood of \(x'\).

This space remains \(T_1\), but it can be seen immediately that
\(x,x'\) cannot be separated by open sets. In fact, this space
cannot be even \(sH\) as every regular open neighborhood of
\(x\) contains \(x'\): given a neighborhood \(U\) of \(x\),
its closure contains the open neighborhood \(\{x'\}\cup U\) of
\(x'\). Likewise, the space is not \(RC\): the map
\(x'\mapsto x\) which is otherwise the identity shows
\(X\) is a retract of \(X'\) which is not closed.
\end{example}

Earlier it was noted that S37 is \(US\), but not \(k_2H\).
In fact, we've already seen an
example that shows that \(k_1H\) is insufficient to imply
\(sH\).

\begin{proposition}
The only nonempty hyperconnected \(sH\) space is the
singleton.
\end{proposition}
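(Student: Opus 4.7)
The plan is to argue by contradiction. Suppose $X$ is a nonempty hyperconnected $sH$ space containing two distinct points $x$ and $y$. By the semi-Hausdorff property, there exists a regular open neighborhood $U$ of $x$ with $y\notin U$; in particular $U$ is nonempty since $x\in U$.

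Next I would exploit hyperconnectedness: since $U$ is a nonempty open set, $U$ is dense, i.e.\ $\cl(U)=X$. Taking interiors, $\int(\cl(U))=\int(X)=X$. But by the definition of regular open, $U=\int(\cl(U))$, so $U=X$. This contradicts $y\notin U$, completing the proof.

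The argument is essentially a one-line combination of the two definitions, so there is no real obstacle; the only thing to be careful about is observing at the outset that the regular open set witnessing $sH$ is nonempty, which is immediate from containing $x$.
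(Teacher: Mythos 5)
Your proof is correct and is essentially the paper's own argument, just written out in more detail: both observe that any regular open neighborhood of $x$ is a nonempty open set, hence dense by hyperconnectedness, hence equal to $\int(\cl(U))=\int(X)=X$, so it cannot miss $y$. No issues.
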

\begin{proof}
Let \(x,y\) be distinct points of a hyperconnected
space. Then every regular open neighborhood of \(x\)
is the entire space, and thus cannot be used to separate
\(x\) from \(y\).
\end{proof}

\begin{example}[S17, Example \ref{S17}]
Earlier we noted that
the co-countable topology on an uncountable
set is \(k_1H\) but not \(RC\).
The space also fails to be \(sH\) by
the preceding proposition.
\end{example}

We've now established that, like \(RC\),
\(sH\) cannot be placed into the chain of implications
given in Corollary \ref{impl-chain}: S17 is \(k_1H\) but
neither \(sH\) nor \(RC\), while the example from
arXiv preprint \verb|2211.12579v3| is \(sH\)
and \(RC\), but not even \(US\). (We will introduce a
more elementary example of an \(sH\) space that fails
to be \(US\) as Example \ref{S185} below.)

So how might \(RC\) and \(sH\) be related? We again
revisit a counter-example that will show us
that \(RC\) need not imply
\(sH\), even when spaces are compact.

\begin{definition}
A space is \term{nowhere locally compact} if no point of
the space has a compact neighborhood.
\end{definition}

\begin{theorem}
The one-point compactification of any
nowhere locally compact space \(X\) is not \(sH\).
\end{theorem}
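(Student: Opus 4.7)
The plan is to show that the only regular open subset of $X^+$ containing $\infty$ is $X^+$ itself, from which the failure of $sH$ is immediate: for any $x\in X$, no regular open neighborhood of $\infty$ can miss $x$, so the distinct pair $(\infty,x)$ witnesses the negation of the definition.

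First I would write an arbitrary open neighborhood of $\infty$ in the standard form $V=\{\infty\}\cup(X\setminus K)$ for some compact closed $K\subseteq X$, then compute $\cl_{X^+}(V)=\cl_X(X\setminus K)\cup\{\infty\}$; this uses only that the $X$-open sets form a basis at every point of $X$ inside $X^+$. The key question is then whether $\infty$ belongs to $\int_{X^+}(\cl_{X^+}(V))$. Unpacking the basic neighborhoods of $\infty$, this reduces to asking whether the open set $\int_X(K)=X\setminus\cl_X(X\setminus K)$ is contained in some compact closed $K'\subseteq X$.

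This is exactly where nowhere local compactness enters. If $\int_X(K)$ is nonempty, then any compact closed $K'\supseteq\int_X(K)$ also contains $\cl_X(\int_X(K))$, which would then be a compact neighborhood of each point of $\int_X(K)$, contradicting the hypothesis. Hence no such $K'$ exists, $\infty$ fails to lie in the interior of $\cl_{X^+}(V)$, and $V$ is not regular open (since $\infty\in V$). In the remaining case $\int_X(K)=\emptyset$, the set $X\setminus K$ is dense in $X$, so $\cl_{X^+}(V)=X^+$ and its $X^+$-interior is all of $X^+$; the regular-open condition then forces $V=X^+$, i.e.\ $K=\emptyset$. Either way $V$ must equal $X^+$, completing the argument. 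The one subtlety to keep straight is the identification $X\setminus\cl_X(X\setminus K)=\int_X(K)$, which is precisely what lets nowhere local compactness do its work.
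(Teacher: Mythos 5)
Your argument is correct and takes essentially the same route as the paper's: both show that nowhere local compactness forces every regular open neighborhood of $\infty$ to equal all of $X^+$ (because $X\setminus K$ must be dense for any compact closed $K$), so $\infty$ cannot be separated from any point of $X$ by a regular open set. Your first case, $\int_X(K)\neq\emptyset$, is in fact vacuous---such a $K$ would itself be a compact neighborhood of its interior points---but this does not affect the correctness of the argument.
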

\begin{proof}
Consider a regular open neighborhood \(U\) of the compactifying
point \(\infty\). Since \(X\) is nowhere locally compact, every
neighborhood of a point in \(x\) cannot miss \(U\). It follows
that \(\cl(U)=X\cup\{\infty\}\), and thus
\(U=\int(\cl(U))=\int(X\cup\{\infty\})=X\cup\{\infty\}\). Therefore \(\infty\)
cannot be separated from any other point by a regular open set.
\end{proof}

\begin{example}[S29, Example \ref{S29}]
The one-point compactification of the rationals
is \(KC\), and therefore \(RC\).
But the space fails to be \(sH\) by the previous
theorem: compact subsets
of the rationals are nowhere dense and therefore cannot contain
an open set,
showing the rationals are nowhere locally compact.
\end{example}

We may also obtain an example of a compact space which is
\(sH\) but neither \(RC\) nor \(US\).

\begin{example}[S185]\label{S185}
We introduce a coarsening of the topology of S97 on
\(\omega^2\cup\{\infty_x,\infty_y\}\). We continue
to assume \(\omega^2\) is discrete,
and neighborhoods of \(\infty_y\) contain all but
finitely-many rows, but neighborhoods of \(\infty_x\)
must contain all but finitely-many columns.

The argument that this space is \(sH\) is identical to
the argument for S97, and it's also immediate to verify
that the space is compact. However, unlike S97, this space is
neither \(RC\) nor \(US\). To see the latter, note
that the sequence \((n,n)\) converges to both \(\infty_x\)
and \(\infty_y\).

As for the former, we will show that the non-closed set
\(\{(n,n):n<\omega\}\cup\{\infty_y\}\) is a retract.
This set is the image of the map
\((a,b)\mapsto(\max\{a,b\},\max\{a,b\})\),
\(\infty_x\mapsto\infty_y\), and \(\infty_y\mapsto\infty_y\).
To see that this map is continuous, we need only consider
the preimage of basic neighborhoods of \(\infty_y\). Such
a neighborhood is of the form
\(\{(n,n):N\leq n<\omega\}\cup\{\infty_y\}\), and its preimage
is equal to
\(\{(n,m):N\leq n<\omega,m<\omega\}\cup\{\infty_x\}
\cup\{(m,n):N\leq n<\omega,m<\omega\}\cup\{\infty_y\}\)
and is thus open.
\end{example}

\section{Locally Hausdorff}

The final property we will investigate was studied in e.g.
\cite{niefield1983note}.

\begin{definition}[P84]
A space is \term{locally Hausdorff} or \(lH\) provided
every point has a \(T_2\) neighborhood.
\end{definition}

\begin{theorem}
\[T_2\Rightarrow lH\Rightarrow T_1\]
\end{theorem}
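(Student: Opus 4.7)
The first implication is immediate: if $X$ is $T_2$, then $X$ itself is a $T_2$ neighborhood of every one of its points, so $X$ is locally Hausdorff.

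For the second implication, the plan is to verify the $T_1$ property by showing that for any distinct points $x, y \in X$, the point $y$ admits an open neighborhood in $X$ missing $x$. Using local Hausdorffness, I would choose a $T_2$ neighborhood $N$ of $y$ and pass to its interior $W = \int(N)$; since $T_2$ is hereditary, $W$ is a $T_2$ open subspace of $X$ containing $y$. I would then split into two cases. If $x \notin W$, then $W$ is already the desired open neighborhood of $y$ missing $x$. If $x \in W$, I would apply the Hausdorff property inside $W$ to produce disjoint sets $A, B$, open in $W$, with $x \in A$ and $y \in B$. Because $W$ is open in $X$, the set $B$ is also open in $X$, and it is then an open neighborhood of $y$ missing $x$. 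Either way, $X \setminus \{x\}$ is a neighborhood of each of its points, so $\{x\}$ is closed.

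The only subtle point worth flagging is that a ``neighborhood'' in the sense of Definition P84 need not itself be open, which is why the plan first replaces $N$ by its interior $W$ before invoking Hausdorff separation — otherwise sets obtained from separation inside $N$ would only be guaranteed open in the subspace $N$ and might fail to be open in $X$. Beyond this small bookkeeping, no real obstacle arises.
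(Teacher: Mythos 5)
Your proposal is correct and follows essentially the same route as the paper: pass to an open $T_2$ neighborhood (the paper takes one around $x$, you take one around $y$ — immaterial by symmetry), and either it already misses the other point or Hausdorff separation inside it produces an open set that does. Your remark about replacing a neighborhood by its interior is a fair bookkeeping point that the paper handles implicitly by simply asserting an open $T_2$ neighborhood exists.
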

\begin{proof}
The first implication is immediate. For the second,
take distinct points \(x,y\) and let \(U\) be an open \(T_2\)
neighborhood of \(x\). If \(U\) misses \(y\), we're done;
otherwise use the \(T_2\) property of \(U\) to obtain a smaller
open neighborhood \(V\) of \(x\) missing \(y\).
\end{proof}

We may quickly verify these do not reverse by returning to
previously explored counterexamples.

\begin{example}[S17, Example \ref{S17}]
The co-countable topology on an uncountable set is
\(k_1H\), but neither \(RC\) nor \(sH\). It is also
not \(lH\) as every nonempty open set is homeomorphic to the whole
non-\(T_2\) space.
\end{example}

\begin{example}[S29, Example \ref{S29}]
The one-point compactification of the rationals is
\(KC\) and \(RC\), and thus \(T_1\), but not \(sH\). It is also not
\(lH\) as no neighborhood of \(\infty\) is \(T_2\).
\end{example}

\begin{example}[S185, Example \ref{S185}]
This space is \(sH\), but neither \(US\) nor \(RC\).
It is \(lH\), and in fact locally metrizable,
as neighborhoods of \(\infty_x\) and \(\infty_y\) that
miss each other are copies of the subspace
\(\{(0,0)\}\cup\left\{\left(\frac{\cos(1/m)}{n},\frac{\sin(1/m)}{n}\right)
:0<m,n<\omega\right\}\subseteq\mathbb R^2\).
\end{example}

\begin{example}[e.g. S37, Example \ref{S37}]
By doubling a non-isolated point of a \(T_2\) space we obtain a space
which is neither \(RC\) nor \(sH\), but remains \(lH\).
\end{example}

We have thus demonstrated that \(lH\) is incomparable with \(RC\),
and fails to slot into the chain of implications in
Corollary \ref{impl-chain}. We've also shown that \(lH\)
does not imply \(sH\). So we proceed to construct a counterexample
to the converse as well.

\begin{example}[e.g. S186]
Let \(X\) be an \(sH\) but non-\(T_2\) space
(e.g. S97, S185), and let \(\omega\) have the discrete topology.
Our space is \((X\times\omega)\cup\{\infty\}\),
where \(X\times\omega\) is open with the product topology.
Neighborhoods of \(\infty\) contain \(X\times(\omega\setminus N)\)
for some \(N<\omega\).

Immediately we see that the space is not \(lH\) as every
neighborhood of \(\infty\) contains a copy of the non-\(T_2\) space
\(Y\).

To see that the space is \(sH\), first consider a point
\((x,n)\in X\times\omega\). Given \(y\in X\setminus\{x\}\),
choose regular open \(U\subseteq X\) that contains \(x\) but
misses \(y\). Then \(U\times\{n\}\) is a regular open set that
contains \((x,n)\) but misses \((y,n)\). Given \(y\in X\) and
\(m\in\omega\setminus\{n\}\), \(X\times\{n\}\) is a regular
open set that contains misses \((y,m)\). Likewise, it misses
\(\infty\).

Finally, consider \(\infty\). Given any \((x,n)\in X\times\omega\),
\((X\times(\omega\setminus(n+1)))\cup\{\infty\}\) is a regular
open set containing \(\infty\) that misses \((x,n)\).
\end{example}

\section{Conclusion}

The results of this paper are summarized in Figure \ref{fig-summary};
note that we have provided counter-examples for every property pair
not shown to have a chain of implications in this figure.

To easily
query these counter-examples, the \(\pi\)-Base includes an Explore
page 
(as shown in Figure \ref{fig-pibase})
that allows for revealing which spaces in the database
satsify or fail the user's desired properties. Future work of the
\(\pi\)-Base will include the generation of diagrams such as
Figure \ref{fig-summary} automatically from its database, and
the generation of open (to the \(\pi\)-Base) questions which
we anticipate will be particularly helpful for developing
projects appropriate for undergraduate and graduate students.

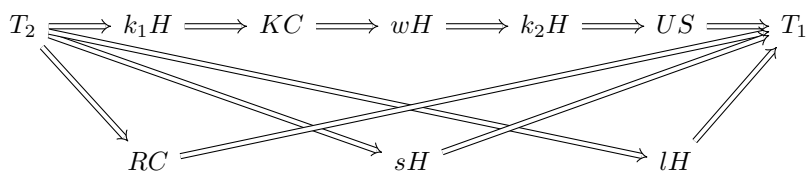
\begin{figure}
\caption{
Summary of results.
}\label{fig-summary}
% https://tikzcd.yichuanshen.de/#N4Igdg9gJgpgziAXAbVABwnAlgFyxMJZABgBpiBdUkANwEMAbAVxiRABUB9AJhAF9S6TLnyEUARnJVajFmwDWncQAl+gkBmx4CRblOr1mrRCADSAYTVCtoogGZ9Mo2wDuqgdZE6UAFkeG5E0Vud3VNLzFkAFZ-WWMQAFUAZSsNYW1IgDZY5xMucVTwjKJJbmkA+IAlSw80m29kBzKDOLY4UM9ilBjmp0CQBndpGCgAc3giUAAzACcIAFskMhAcCCQAdmoGLDB4qAgmACMGVmoACxg6KCQwJgYGahw6LAY2SF3U2YWlx7XESQGOz2B2OpxAFyuNzuDxWz1eJnerFqX0W-1+SD0gI+Jn2RxOIHOl2uiFu90ecLeBCR6hRGPRiAcWOBeLBEOJpJhTxelI+yLmqMZqyQfiZbFxoIJ4KJULJsO5CKpn35wvpMVFOJB+MJkJJ0PJ8vAir530QaqFiGy6pA4q1Up1HP18MNvJpysQmxWf0t22x1s1rOlutlXKdiKVJoAHPTvUCxf7JWyZZyKQqXdM3ctzVGrTaA-a9XLQ0bXSbM38AJxbWMalkJwMOws86npk2Vz1IGO+3N1-PBlPOpEUPhAA
\begin{center}
\begin{tikzcd}
T_2 \arrow[rdd, Rightarrow] \arrow[r, Rightarrow] \arrow[rrrdd, Rightarrow] \arrow[rrrrrdd, Rightarrow] & k_1H \arrow[r, Rightarrow]     & KC \arrow[r, Rightarrow] & wH \arrow[r, Rightarrow]     & k_2H \arrow[r, Rightarrow] & US \arrow[r, Rightarrow]   & T_1 \\
                                                                                                        &                                &                          &                              &                            &                            &     \\
                                                                                                        & RC \arrow[rrrrruu, Rightarrow] &                          & sH \arrow[rrruu, Rightarrow] &                            & lH \arrow[ruu, Rightarrow] &    
\end{tikzcd}
\end{center}
\end{figure}

\begin{figure}
\caption{
The Explore feature of \(\pi\)-Base}\label{fig-pibase}
\includegraphics[width=5in]{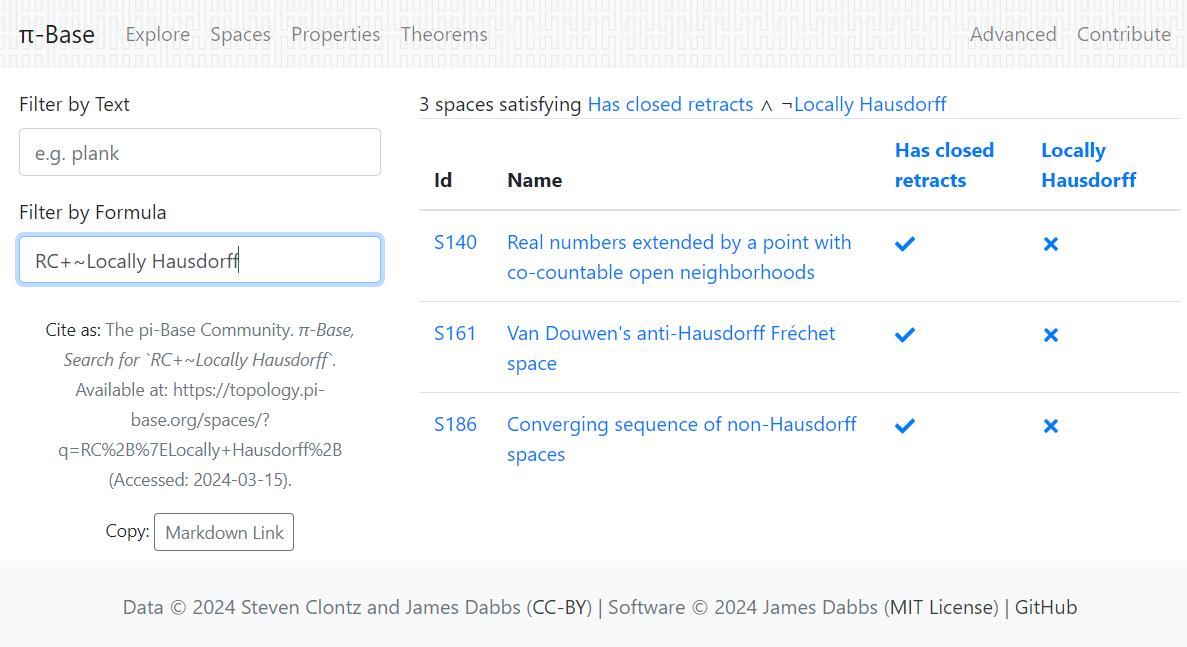}
\end{figure}

% \section{Acknowledgements}

% The author wishes to thank the \(\pi\)-Base, Math StackExchange,
% and MathOverflow communities, and Patrick Rabau in particular,
% for their comments on several of the results of this paper
% as they were aired out in various online discussions.

\bibliographystyle{vancouver}
\bibliography{sample}

% After you receive your provisional accept letter, you need to re-submit your manuscript with the following information  (commented out in the blind submission) filled in for each author. Just remove the \% comment coding, fill out the information, and re-submit your manuscript.

%\begin{biog}
%\item[Author Name 1] Insert author bio here.
%\begin{affil}
%Department of Mathematics, University America, Washington DC 20036\\
%authorname@ua.edu
%\end{affil}
%\end{biog}

%\begin{biog}
%\item[Author Name 2] Insert author bio here.
%\begin{affil}
%Department of Mathematics, University America, Washington DC 20036\\
%authorname@ua.edu
%\end{affil}
%\end{biog}

\vfill\eject

\end{document}